\newtheorem{theorem}{Theorem}[section]
\newtheorem{lemma}[theorem]{Lemma}
\newtheorem{corollary}[theorem]{Corollary}
\def\bb #1{ {\mathbb #1} }
\def\c #1{ {\mathcal #1} }
\newcommand{\ui}{ {\underline{i} }}
\begin{document}
\title{Meromorphy of the rank one unit root $L$-function revisited}

\author{C. Douglas Haessig}

\maketitle

\abstract{We demonstrate that Wan's alternate description of Dwork's unit root $L$-function in the rank one case may be modified to give a proof of meromorphy that is classical, eliminating the need to study sequences of uniform meromorphic functions.}


\section{Introduction}

To define the unit root $L$-function, we recall some notation from \cite{Wan-DworkConjectureunit-1999}. Let $\bb F_q$ be the finite field with $q = p^a$ elements. Let $K$ be a finite extension field of $\bb Q_p$ with uniformizer $\pi$, ring of integers $R$, and residue field $\bb F_q$. For $u = (u_1, \ldots, u_n) \in \bb Z^n$, define $|u| := |u_1| + \cdots + |u_n|$. Let 
\[
A_0 := \{ \sum_{u \in \bb Z_{\geq 0}^n} a_u x^u \mid a_u \in R, a_u \rightarrow 0 \text{ as } |u| \rightarrow \infty \},
\]
a Banach algebra with sup-norm $|\sum a_u x^u| := \sup | a_u|$. Let $\sigma$ be the $R$-linear map on $A_0$ defined by $\sigma(x^u) = x^{qu}$. Define the Banach $A_0$-module with formal basis $\{ e_i \}_{i = 0}^\infty$:
\[
M := \{ \sum_{i = 0}^\infty a_i e_i \mid a_i \in A_0 \}
\]
with sup-norm $| \sum a_i e_i | := \sup |a_i |$. A \emph{nuclear $\sigma$-module} over $A_0$ is a pair $(M, \phi)$ where $\phi$ is a $\sigma$-linear map, $\phi( \sum a_i e_i ) = \sum \sigma(a_i) \phi(e_i)$, satisfying $\lim_{i \rightarrow \infty} | \phi(e_i) | = 0$.  

For a geometric point $\bar x$ of $\bb G_m^n / \bb F_q$ of degree $d(\bar x) := deg(\bar x) := [\bb F_q(\bar x): \bb F_q]$, denote by $\hat x$  the Teichm\"uller lifting of $\bar x$. The fibre $M_{\hat x}$ at $\bar x$ is a Banach $R[\hat x]$-module with formal basis $\{ e_i \}_{i=0}^\infty$, where $R[\hat x]$ is the ring obtained by adjoining the coordinates of $\hat x$ and
\[
M_{\hat x} := \{ \sum_{i \geq 0} b_i e_i \mid b_i \in R[\hat x] \},
\]
and $\sigma$ acts on $R[\hat x]$ by $\sigma(\hat x) = \hat x^q$. The $d(\bar x)$-th iterate of the fibre map $\phi_{\hat x}^{d(\bar x)}: M_{\hat x} \rightarrow M_{\hat x}$ is an $R[\hat x]$-linear map defined over $R$ whose characteristic function $det(1 - \phi_{\hat x}^{d(\bar x)} T)$ is $p$-adic entire and independent of the geometric point $\bar x$ (but dependent on the closed point over $\bb F_q$ containing $\bar x$). 

Let $B(x)$ be the matrix of $\phi$ with respect to the basis $\{ e_i \}_{i=0}^\infty$, where $B(x)$ acts on column vectors. The nuclear condition on $\phi$ implies that the columns of $B(x)$ become increasingly divisible by $\pi$, and so we may write
\[
B(x) = (C_0, \pi C_1, \pi^2 C_2, \ldots)
\]
where $C_i$ consists of $h_i$ number of columns with entries in $A_0$, and $h_i$ is maximal in the sense that the last column in $C_i$ has an entry which is not divisible by $\pi^{i+1}$.  We say $(M, \phi)$ is \emph{ordinary at slope zero} if at every fiber $\bar x$, $det(1 - \phi_{\hat x}^{d(\bar x)} T)$ has precisely $h_0$ unit roots.

In this paper, we will assume $(M, \phi)$ is ordinary at slope zero, and $h_0 = 1$. In this case, we may factorize 
\begin{equation}\label{E: factor}
det(1 - \phi_{\hat x}^{d(\bar x)} T) = \prod_{i = 0}^\infty (1 - \pi_i(\bar x) T)
\end{equation}
and order the roots such that $ord_p \> \pi_0(\bar x) = 0$ and $ord_p \> \pi_i(\bar x) > 0$ for $i \geq 1$. As explained in Section \ref{S: remarks}, classical exponential sums defined over the torus give rise to such $\sigma$-modules, with the additional property that $\phi e_0 \equiv e_0 \text{ mod}(\pi)$; that is, the unique unit root $\pi_0$ is a 1-unit. Thus, it is natural to assume the \emph{normalization condition}:
\begin{equation}\label{E: normalizeB}
\phi e_0 \equiv e_0 \text{ mod}(\pi) \quad \text{and} \quad \phi e_i \equiv 0 \text{ mod}(\pi) \text{ for all } i \geq 1. 
\end{equation}
It is this extra property that makes the following theory work. The general case, when the unit roots on the fibers are not 1-units, may be reduced to a twist of a $\sigma$-module which satisfies (\ref{E: normalizeB}); see \cite[Section 6]{Wan-DworkConjectureunit-1999} for details on this reduction.

Let $\kappa \in \bb Z_p$. Define the $\kappa$-moment unit root $L$-function
\[
L_\text{unit}(\kappa, \phi, T) := \prod_{\bar x \in | \bb G_m^n / \bb F_q|} \frac{1}{1 - \pi_0(\bar x)^\kappa T^{deg(\bar x)}} \in 1 + T R[[T]].
\]
The definition makes sense since $\pi_0(\bar x)$ is a 1-unit by the normalization condition. For $c \in \bb R$, we say $(M, \phi)$ is  \mbox{\emph{$c$ log-convergent}} if, writing $B(x) = \sum_{u \in \bb Z^n} B_u x^u$, we have 
\[
\lim_{|u| \rightarrow \infty} \inf \frac{ ord_\pi(B_u) }{\log_q|u|} \geq c,
\]
where $ord_\pi(B_u)$ is the minimal order of $\pi$ in all entries of the matrix $B_u$. The main result of Wan \cite{Wan-DworkConjectureunit-1999} is the following, adapted to the case when $\pi_0(\bar x)$ is a 1-unit on each fiber:

\begin{theorem}\label{T: main}\cite[Unit Root Theorem, p. 893]{Wan-DworkConjectureunit-1999}
Let $(M, \phi)$ be a $c$ log-convergent, nuclear $\sigma$-module, ordinary at slope zero of rank one ($h_0 = 1$). Assume it satisfies the normalization condition (\ref{E: normalizeB}). Then $L_\text{unit}(\kappa, \phi, T)$ is $p$-adic meromorphic in $|T| < p^c$ and continuous for $\kappa \in \bb Z_p$.
\end{theorem}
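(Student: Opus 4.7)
The plan is to express $L_{\text{unit}}(\kappa,\phi,T)$ directly as a quotient of two classical $p$-adic entire functions on the disk $|T|<p^c$, bypassing Wan's approximation by sequences of uniform meromorphic functions. The starting point is to use the normalization (\ref{E: normalizeB}) to derive an explicit formula for the unit root on each fiber. Writing the matrix of $\phi$ in block form
\[
B(x)=\begin{pmatrix} \alpha(x) & \vec\beta(x)\\ \pi\vec\gamma(x) & \pi D(x)\end{pmatrix},\qquad \alpha\equiv 1\pmod\pi,
\]
an iterative substitution on the unit eigenline of $\phi^{d(\bar x)}_{\hat x}$ yields a convergent $p$-adic series expressing $\pi_0(\hat x)$ in terms of the entries of $B$ along the Teichm\"uller orbit $\hat x,\hat x^q,\ldots,\hat x^{q^{d(\bar x)-1}}$, with leading term $\prod_{j=0}^{d(\bar x)-1}\alpha(\hat x^{q^j})$ and corrections that are $\pi$-divisible of growing order in the recursion depth.

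Since $\pi_0(\bar x)$ is a $1$-unit, the binomial expansion $\pi_0(\bar x)^\kappa=\sum_{k\ge 0}\binom{\kappa}{k}(\pi_0(\bar x)-1)^k$ converges in $R[\hat x]$ and is $p$-adic continuous in $\kappa$. Substituting into the Euler product for $L_{\text{unit}}$, taking the logarithmic derivative, and applying Dwork's trace formula term-by-term rewrites $L_{\text{unit}}$ as an infinite product
\[
L_{\text{unit}}(\kappa,\phi,T)=\prod_{k\ge 0} L_k(T)^{c_k(\kappa)},
\]
where each $L_k(T)$ is the $L$-function of a nuclear $\sigma$-module $(M_k,\phi_k)$ built from $(M,\phi)$ by tensor and symmetric power operations on the off-diagonal block $(\vec\beta,\vec\gamma,\pi D)$, and $c_k(\kappa)\in\bb Z_p$ depends continuously on $\kappa$. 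The classical ingredient here is that each $L_k(T)$ is $p$-adic meromorphic on $|T|<p^c$ by the Monsky--Reich--Dwork--Wan trace formula for $c$ log-convergent nuclear $\sigma$-modules; in particular $L_k=A_k/B_k$ with $A_k,B_k$ $p$-adic entire on $|T|<p^c$.

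The modification, compared to Wan's original proof, consists in assembling these pieces into a \emph{single} pair of entire functions. Because the corrections in $\pi_0-1$ are divisible by $\pi$, the quantity $(\pi_0(\bar x)-1)^k$, and hence the relevant contribution of $L_k-1$ to the coefficients of $L_{\text{unit}}$, carry $\pi$-valuation growing at least linearly in $k$. Consequently the products $F(T):=\prod_{k\ge 0}A_k(T)^{c_k(\kappa)}$ and $G(T):=\prod_{k\ge 0}B_k(T)^{c_k(\kappa)}$ converge coefficient-by-coefficient in $R[[T]]$ and define genuine $p$-adic entire functions on $|T|<p^c$. This exhibits $L_{\text{unit}}(\kappa,\phi,T)=F(T)/G(T)$ as a classical meromorphic function, and continuity in $\kappa$ is inherited directly from the continuity of each $c_k(\kappa)$. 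The main obstacle I anticipate is verifying that $c$ log-convergence is inherited by the auxiliary $\sigma$-modules $(M_k,\phi_k)$ with bounds \emph{uniform in $k$}, so that the coefficient-wise convergence actually produces functions analytic on the full disk $|T|<p^c$ rather than on a shrinking family of sub-disks; this requires careful Newton-polygon bookkeeping through the symmetric and tensor constructions, and is the place where the "modification" of Wan's description must be engineered with the right quantitative estimates.
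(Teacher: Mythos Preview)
Your proposal has a genuine gap at the step where you assert that the binomial expansion yields a product decomposition $L_{\text{unit}}=\prod_{k\ge 0} L_k^{c_k(\kappa)}$ with $c_k(\kappa)\in\bb Z_p$. The binomial series for $\pi_0(\bar x)^\kappa$ is a \emph{sum}, and substituting it into the logarithmic derivative of the Euler product does not reorganize into Euler factors raised to $p$-adic exponents; no mechanism for this is given. Even granting such a decomposition, raising an $L$-function (or an entire function $A_k$) to a non-integer $p$-adic power is only defined when the function is a $1$-unit in $R[[T]]$, and even then the result is a formal power series with no reason to be entire on $|T|<p^c$. Your subsequent claim that $F=\prod_k A_k^{c_k(\kappa)}$ and $G=\prod_k B_k^{c_k(\kappa)}$ are entire therefore has no clear content.

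More fundamentally, your scheme puts the $\kappa$-dependence in the exponents while keeping the auxiliary modules $(M_k,\phi_k)$ independent of $\kappa$. This is exactly the architecture of Wan's original proof, and the obstacle you yourself anticipate---uniform $c$ log-convergence across the tower $(M_k,\phi_k)$---is the reason that proof requires the theory of uniform families of meromorphic functions. The paper's modification goes in the opposite direction: it puts the $\kappa$-dependence in the \emph{operator}. Using (\ref{E: normalizeB}), one defines a single $\sigma$-linear map $[\phi]_\kappa$ on $\c S=A_0[[e_i:i\ge 1]]$ by
\[
[\phi]_\kappa(e_{i_1}\cdots e_{i_r}):=(\Upsilon\circ\phi e_0)^{\kappa-r}\,(\Upsilon\circ\phi e_{i_1})\cdots(\Upsilon\circ\phi e_{i_r}),
\]
where $(\Upsilon\circ\phi e_0)^{\kappa-r}$ makes sense precisely because $\phi e_0\equiv e_0\pmod\pi$. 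For each fixed $\kappa\in\bb Z_p$ this is a single $c$ log-convergent nuclear $\sigma$-module, so its Fredholm determinant is entire on $|T|<p^c$ by the classical Dwork trace formula---no uniformity across a family is needed, and continuity in $\kappa$ is immediate from $[\phi]_\kappa=(\Upsilon\circ\phi e_0)^\kappa\circ[\phi]_0$. The link back to $L_{\text{unit}}$ is the combinatorial identity of Lemma~\ref{L: unit root relation}, a product with \emph{integer} exponents $(-1)^{s-1}(s-1)$ which converges because $L^{(s)}\equiv 1\pmod{\pi^{s-1}}$. That is the idea your proposal is missing.
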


Wan's proof breaks down into two steps. First, he shows that $L_\text{unit}(\kappa, \phi, T)$ is a limit of a sequence of auxiliary $L$-functions coming from symmetric and exterior powers of $\phi$. The second step is to show that this sequence consists of uniformly $p$-adic meromorphic functions in a certain sense, and so the limit $L_\text{unit}(\kappa, \phi, T)$ is meromorphic as desired.

In \cite[Section 8]{Wan-DworkConjectureunit-1999} and \cite[Section 7]{Wan-Rankonecase-2000}, Wan gives an outline of an alternate method using ``limiting $\sigma$-modules''. The main result of this paper is to modify this alternate method in a way such that the proof of Theorem \ref{T: main} is classical, meaning it uses only the classical Dwork trace formula and elementary operator theory. The use of uniform sequences of meromorphic functions is avoided, simplifying the proof.

While the proof uses only classical techniques, we emphasize a distinguishing feature that sets it apart from the classical theory of $L$-functions of exponential sums over finite fields. To describe this, we first need to recall the $p$-adic structure of the latter $L$-function. Let $f$ be a Laurent polynomial in $\bb F_q[x_1^\pm, \ldots, x_n^\pm]$. Associated to the sequence of exponential sums
\[
S_m(f) := \sum_{\bar x \in (\bb F_{q^m}^*)^n} \zeta_p^{Tr_{\bb F_{q^m} / \bb F_p}( f(\bar x))},
\]
where $\zeta_p$ is a primitive $p$-th root of unity, is the $L$-function
\[
L(f, \bb G^n / \bb F_q, T) := \exp\left( \sum_{m=1}^\infty S_m(f) \frac{T^m}{m} \right).
\]
Dwork's theory, as shown by Bombieri \cite{Bombieri-exponentialsumsin-1966}, defines an operator $\alpha$ on a $p$-adic Banach space whose Fredholm determinant satisfies
\[
L(f, \bb G^n / \bb F_q, T)^{(-1)^{n+1}} = det(1 - \alpha T)^{\delta^n}.
\]
Here, $\delta$ is defined by $g(T)^\delta := g(T) / g(qT)$. Since the Fredholm determinant is $p$-adic entire, the $L$-function is $p$-adic meromorphic. Returning to the topic at hand, using (\ref{E: factor}), define
\[
L^{(0)}(\kappa, \phi, T) := \prod_{\bar x \in | \bb G_m^n / \bb F_q|} \> \> \> \prod \frac{1}{1 - \pi_0(\bar x)^{\kappa - r} \pi_{i_1}(\bar x) \cdots \pi_{i_r}(\bar x) T^{deg(\bar x)}},
\]
where the second product runs over all $r \geq 0$ and $1 \leq i_1 \leq i_2 \leq \cdots$. Intuitively, we think of this $L$-function as the $\kappa$-symmetric power of $\phi$. The central point of Section \ref{S: normalized}, and the proof of Theorem \ref{T: main}, is that this $L$-function has the same $\delta$-structure:
\begin{equation}\label{E: delta sym}
L^{(0)}(\kappa, \phi, T)^{(-1)^{n+1}} = det(1 - F_{B^{[\kappa]}} T)^{\delta^n},
\end{equation}
making it also $p$-adic meromorphic (in the appropriate disc). Its relation to the unit root $L$-function is given in Lemma \ref{L: unit root relation}, and as a consequence of this lemma, $L_\text{unit}(\kappa, \phi, T) / L^{(0)}(\kappa, \phi, T)$ is a nowhere zero $p$-adic analytic function on $|T|_p < 1+ \epsilon$ for some $\epsilon > 0$. In particular, by (\ref{E: delta sym}), the zeros and poles of the unit root $L$-function have the same type of $\delta$-structure as the classical $L$-function of exponential sums on a disk $|T|_p < 1+ \epsilon$, but no further due to multiplicities $(-1)^{s-1} (s-1)$ (from Lemma \ref{L: unit root relation}). This may explain why Dwork and Sperber  \cite{Dwork-Sperber} were only able to analytically continue the unit root $L$-function to a disk slightly larger than the unit disk, and why Emerton and Kisin \cite{Emerton-Kisin} were able to describe the zeros and poles of the unit root $L$-function on the closed unit disk.

Some additional remarks are given in Section \ref{S: remarks}.

\section{Meromorphy}\label{S: normalized}

We continue the notation from the introduction. Let $(M, \phi)$ be a nuclear, $c$ log-convergent $\sigma$-module, ordinary at slope zero of rank one ($h_0 = 1$). We will assume the normalization condition (\ref{E: normalizeB}). It follows that the fiber map $\phi_{\hat x}^{d(\bar x)}$ also satisfies $\phi_{\hat x}^{d(\bar x)} e_0 \equiv e_0 \text{ mod}(\pi)$ and $\phi_{\hat x}^{d(\bar x)} e_i \equiv 0 \text{ mod}(\pi)$ for all $i \geq 1$, and thus the eigenvalues
\begin{equation}\label{E: roots}
det(1 - \phi_{\hat x}^{d(\bar x)}  T) = \prod_{i=0}^\infty (1 - \pi_i(\bar x) T), \qquad \pi_i(\bar x) \rightarrow 0 \text{ as } i \rightarrow \infty,
\end{equation}
satisfy: $\pi_0(\bar x) \equiv 1$ mod($\pi$), and $\pi_i(\bar x) \equiv 0$ mod($\pi$) for all $i \geq 1$. 

To $p$-adic analytically continue $L_\text{unit}(\kappa, \phi, T)$ we relate it to the following $L$-functions. For each $s \geq 0$, define
\[
L^{(s)}(\kappa, \phi, T) := \prod_{\bar x \in | \bb G_m^n / \bb F_q|} \> \> \> \prod \frac{1}{1 - \pi_0(\bar x)^{\kappa - r - s} \pi_{i_1}(\bar x) \cdots \pi_{i_r}(\bar x) \cdot \pi_{j_1}(\bar x) \cdots \pi_{j_s}(\bar x) T^{deg(\bar x)}},
\]
where the second product runs over all $r \geq 0$, $1 \leq i_1 \leq i_2 \leq \cdots$ and $0 \leq j_1 < j_2 < \cdots < j_s$. Observe that this product makes sense since there are only finitely many $\bar x$ of bounded degree, and $| \pi_{i_1}(\bar x) \cdots \pi_{i_r}(\bar x) \cdot \pi_{j_1}(\bar x) \cdots \pi_{j_s}(\bar x)| \rightarrow 0$ as $i_1 + \cdots + i_r + j_1 + \cdots + j_s \rightarrow \infty$. Intuitively, $L^{(s)}(\kappa, \phi, T)$ is the $L$-function of $Sym^{\kappa - s} \phi \otimes \wedge^s \phi$.

\begin{lemma}\label{L: unit root relation}
\begin{equation}\label{E: unit root relation}
L_\text{unit}(\kappa, \phi, T) = \prod_{s=0}^\infty L^{(s)}(\kappa, \phi, T)^{(-1)^{s-1} (s-1)}.
\end{equation}
\end{lemma}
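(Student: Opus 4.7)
The plan is to pass, at each closed point $\bar x$ of $\mathbb G_m^n/\mathbb F_q$, from the claimed Euler-product identity to a formal power-series identity in $T$ after taking logarithms, and then to reduce it to a purely combinatorial identity among symmetric functions of the eigenvalues $\pi_i(\bar x)$. Since both sides of (\ref{E: unit root relation}) are multiplicative over closed points, it suffices to establish it for each $\bar x$; the $p$-adic convergence of the outer product over $s$ follows from the normalization condition (\ref{E: normalizeB}), which forces $\pi_0(\bar x) \in 1+\pi R[\hat x]$ and $\pi_i(\bar x)\in\pi R[\hat x]$ with $\pi_i(\bar x)\to 0$ for $i\geq 1$.

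Fix $\bar x$ of degree $d$, write $\alpha_i := \pi_i(\bar x)$, $\beta_i := \alpha_i/\alpha_0$ (so $\beta_0=1$ and $\beta_i\in \pi R[\hat x]$ for $i\geq 1$), and expand $-\log L_{\bar x}^{(s)}(\kappa,\phi,T)$ in $T$. The inner sum over $1 \leq i_1 \leq \cdots \leq i_r$ with weight $\alpha_0^{-mr}$ is the generating function for complete symmetric polynomials,
\[
H(m) := \sum_{r \geq 0} \alpha_0^{-mr}\, h_r(\alpha_1^m, \alpha_2^m, \ldots) = \prod_{i \geq 1} \frac{1}{1 - \beta_i^m},
\]
while the sum over $0 \leq j_1 < \cdots < j_s$ together with $\alpha_0^{-ms}$ yields
\[
E_s(m) := \alpha_0^{-ms}\, e_s(\alpha_0^m, \alpha_1^m, \ldots) = e_s(1, \beta_1^m, \beta_2^m, \ldots).
\]
Combining these,
\[
-\log L_{\bar x}^{(s)}(\kappa,\phi,T) = \sum_{m\geq 1} \frac{\alpha_0^{m\kappa} T^{md}}{m}\, H(m)\, E_s(m),
\]
while the local factor of $L_\text{unit}$ contributes $-\log(1-\alpha_0^\kappa T^d)^{-1} = \sum_{m\geq 1}\alpha_0^{m\kappa} T^{md}/m$. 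So the lemma reduces to the one-variable identity
\[
H(m)\,\sum_{s\geq 0}(-1)^{s-1}(s-1)\, E_s(m)\;=\;1 \qquad \text{for each } m \geq 1.
\]

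This identity is elementary. Set $E(t) := \sum_{s\geq 0} E_s(m)\, t^s = (1+t)\prod_{i\geq 1}(1 + \beta_i^m t)$. Using $(-1)^{s-1}(s-1) = (-1)^s - s(-1)^s$, the weighted sum equals $E(-1)+E'(-1)$. The factor $(1+t)$ kills $E(-1)$, and the product rule gives $E'(-1)=\prod_{i\geq 1}(1-\beta_i^m)$, which is exactly $H(m)^{-1}$, completing the pointwise identity and hence the lemma.

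The main obstacle is organizational rather than conceptual: justifying the interchanges of summation above and verifying that $\prod_{s\geq 0} L^{(s)}(\kappa,\phi,T)^{(-1)^{s-1}(s-1)}$ converges $p$-adically. Both follow from slope-zero ordinarity together with (\ref{E: normalizeB}): the $\beta_i^m$ are divisible by $\pi^m$ and tend to $0$, so every inner sum or product converges absolutely, and $E_s(m)$ is divisible by the product of the $s$ smallest valuations among the $\beta_i^m$, which grows with $s$ and drives $L^{(s)}(\kappa,\phi,T)\to 1$ coefficient-wise.
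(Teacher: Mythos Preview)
Your proof is correct and takes a genuinely different route from the paper's. The paper works directly with the Euler factors: fixing a monomial $\tilde\pi_{i_1}^{e_1}\cdots\tilde\pi_{i_m}^{e_m}$ with $m$ distinct positive indices, it counts that this monomial occurs in $L^{(s)}$ exactly $\binom{m}{s-1}+\binom{m}{s}=\binom{m+1}{s}$ times (splitting on whether $j_1=0$), and then invokes the binomial identity $\sum_{s=0}^{m+1}(-1)^{s-1}(s-1)\binom{m+1}{s}=0$. You instead pass to logarithms and package the $r$-sum and $s$-sum as the generating series $H(m)=\prod_{i\geq 1}(1-\beta_i^m)^{-1}$ and $E(t)=(1+t)\prod_{i\geq 1}(1+\beta_i^m t)$, reducing the whole lemma to the one-line computation $E(-1)+E'(-1)=\prod_{i\geq 1}(1-\beta_i^m)$. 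Your approach is cleaner and makes the role of the weight $(-1)^{s-1}(s-1)$ transparent as the operator $(\mathrm{id}+\partial_t)|_{t=-1}$ acting on $E$; the paper's approach is more elementary and avoids symmetric-function machinery. One minor imprecision in your last sentence: since one of the variables in $E_s(m)=e_s(1,\beta_1^m,\ldots)$ is $1$, the correct bound is $E_s(m)\equiv 0\pmod{\pi^{m(s-1)}}$ rather than a product over $s$ smallest valuations, but this still gives $L^{(s)}\to 1$ and does not affect the argument.
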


\begin{proof}
For convenience, we ignore the $\bar x$ and write $\pi_i$ for $\pi_i(\bar x)$. Also, we will write $\tilde \pi_i := \pi_0^{-1} \pi_i$. Thus, (\ref{E: unit root relation}) is a product of terms with coefficients of the form $\tilde \pi_{i_1} \cdots \tilde \pi_{i_r} \cdot \tilde \pi_{j_1} \cdots \tilde \pi_{j_s} =  \tilde \pi_{i_1}^{e_1} \cdots \tilde \pi_{i_m}^{e_m}$, where we have ignored the factor $\pi_0^\kappa$. In this form, either the product $\tilde \pi_{i_1}^{e_1} \cdots \tilde \pi_{i_m}^{e_m}$ equals 1 (which occurs when each $i_j = 0$), else $i_1 \geq 1$. It is the latter case which we wish to show vanishes from the right-hand side of (\ref{E: unit root relation}). Thus, assume $i_1 \geq 1$, and thus $m \geq 1$.

Observe that $ \tilde \pi_{i_1}^{e_1} \cdots \tilde \pi_{i_m}^{e_m}$ does not appear in $L^{(s)}(\kappa, \phi, T)$ unless $m \geq s-1$. Suppose $m > s-1$. In this case, the term $\tilde \pi_{i_1}^{e_1} \cdots \tilde \pi_{i_m}^{e_m}$ can appear in $L^{(s)}$ by either having $j_1 = 0$, or $j_1 > 0$. If $j_1 = 0$, then the term appears $\binom{m}{s-1}$ times, whereas if $j_1 > 0$, then the term appears $\binom{m}{s}$ times. This means the term appears in $L^{(s)}$ a total number of $\binom{m}{s-1} + \binom{m}{s} = \binom{m+1}{s}$ times. Next, if $m = s-1$, then $\tilde \pi_{i_1}^{e_1} \cdots \tilde \pi_{i_m}^{e_m}$ appears in $L^{(s)}$ a total of $\binom{m}{s-1} = \binom{m}{m}$ times. Thus, taking the exponents into account in (\ref{L: unit root relation}), we see that $\tilde \pi_{i_1}^{e_1} \cdots \tilde \pi_{i_m}^{e_m}$ appears on the right hand side of (\ref{E: unit root relation}) a total number of
\[
\sum_{s=0}^m (-1)^{s-1} (s-1) \binom{m+1}{s} + (-1)^m m \binom{m}{m} = \sum_{s=0}^{m+1} (-1)^{s-1} (s-1) \binom{m+1}{s} = 0,
\]
where the last equality is a binomial coefficient identity. This shows that $ \tilde \pi_{i_1}^{e_1} \cdots \tilde \pi_{i_m}^{e_m}$ vanishes from the right-hand side of (\ref{E: unit root relation}), which leaves only terms coming from $s=0$ and $m = 0$. This proves (\ref{E: unit root relation}).

A more conceptual proof using eigenvalues is given in \cite[Lemma 4.8]{Wan-DworkConjectureunit-1999}.
\end{proof}

The normalization condition (\ref{E: normalizeB}) implies that $L^{(s)}(\kappa, \phi, T) \equiv 1$ mod($\pi^{s-1}$), since, in the definition of $L^{(s)}$, the $\pi_{j_1}, \ldots, \pi_{j_s}$ are distinct. Thus, if we demonstrate that $L^{(s)}(\kappa, \phi, T)$ is $p$-adic meromorphic for $|T| < p^c$, then by (\ref{E: unit root relation}), $L_\text{unit}(\kappa, \phi, T)$ is $p$-adic meromorphic in the same region. We will first focus on the case $s = 0$.

Let $\c S_{\hat x} := R[\hat x][[ e_i : i \geq 1]]$ be the formal power series ring over $R[\hat x]$ with formal basis $\c B := \{1\} \cup \{ e_\ui := e_{i_1} \cdots e_{i_r} \mid r \geq 1, 1 \leq i_1 \leq \cdots \leq i_r \}$, equipped with the sup-norm. (Note the exclusion of $e_0$. We think of 1 as $e_0$, as we will see in the definition of $\Upsilon$.) We may view $M_{\hat x}$ as a subspace of $\c S_{\hat x}$ by defining the map $\Upsilon: M_{\hat x} \rightarrow \c S_{\hat x}$ via $\Upsilon( e_i ) := e_i$ if $i \geq 1$, else $\Upsilon(e_0) := 1$.  By (\ref{E: normalizeB}), we may write $\Upsilon( \phi_{\hat x} e_0 ) = 1 + \eta(\hat x)$ for some $\eta(\hat x) \in \Upsilon(M_{\hat x}) \subset \c S_{\hat x}$ satisfying $|\eta(\hat x)| < 1$. Consequently, $(\Upsilon \circ \phi_{\hat x} e_0)^\kappa = \sum_{i=0}^\infty \binom{\kappa}{i} \eta(\hat x)^i \in \c S_{\hat x}$. Define the map $[ \phi_{\hat x} ]_\kappa: \c S_{\hat x} \rightarrow \c S_{\hat x}$ by
\[
[ \phi_{\hat x} ]_\kappa( e_{i_1} \cdots e_{i_r} ) := ( \Upsilon \circ \phi_{\hat x} e_0)^{\kappa-r} (\Upsilon \circ \phi_{\hat x} e_{i_1}) \cdots (\Upsilon \circ \phi_{\hat x}e_{i_r} ).
\]

Define the length of $e_\ui := e_{i_1} \cdots e_{i_r}$ by $\text{length}( e_\ui ) := r$. For $\xi \in \c S_{\hat x}$, define $\text{length}(\xi)$ as the supremum of the length of its individual terms. In most cases, the length of $\xi$ will be infinite. Write $\kappa = \sum_{i=0}^\infty a_i p^i$ with $a_i \in \{0, 1, \ldots, p-1\}$, and define $k_m := \sum_{i=0}^m a_i p^i$. For each $m \in \bb Z_{\geq 0}$, define the restriction map $[ \phi_{\hat x}]_{\kappa; m}: \c S_{\hat x} \rightarrow \c S_{\hat x}$ by
\[
[ \phi_{\hat x}]_{\kappa; m}( e_\ui) := 
\begin{cases}
[ \phi_{\hat x}]_{k_m}( e_\ui ) & \text{if length($e_\ui ) \leq k_m$} \\
0 & \text{otherwise.}
\end{cases} 
\]

\begin{lemma}\label{L: operator limit}
As operators on $\c S_{\hat x}$, $\lim_{m \rightarrow \infty} [ \phi_{\hat x}]_{\kappa; m} = [ \phi_{\hat x}]_\kappa$.
\end{lemma}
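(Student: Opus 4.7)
The plan is to estimate the difference $\bigl([\phi_{\hat x}]_{\kappa;m} - [\phi_{\hat x}]_\kappa\bigr)(e_\ui)$ directly on each basis monomial $e_\ui = e_{i_1} \cdots e_{i_r}$ and then pass to general $\xi \in \c S_{\hat x}$ by the ultrametric inequality. The two main tools are the normalization condition (\ref{E: normalizeB}), which forces $|\Upsilon(\phi_{\hat x} e_i)| \leq |\pi|$ for each $i \geq 1$, and the fact that $1+\eta$ is a 1-unit in $\c S_{\hat x}$ (so all integer powers of $1+\eta$ have unit norm). The argument splits according to whether $r \leq k_m$ or $r > k_m$.

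When $r \leq k_m$, both operators produce the common factor $\prod_{j=1}^r \Upsilon(\phi_{\hat x} e_{i_j})$ of norm at most $|\pi|^r$, and their difference may be factored as
\[
\bigl([\phi_{\hat x}]_{\kappa;m} - [\phi_{\hat x}]_\kappa\bigr)(e_\ui) = (1+\eta)^{-r}\bigl((1+\eta)^{k_m} - (1+\eta)^\kappa\bigr)\prod_{j=1}^r \Upsilon(\phi_{\hat x} e_{i_j}),
\]
which has norm at most $|(1+\eta)^{k_m} - (1+\eta)^\kappa|\cdot|\pi|^r$. When $r > k_m$, the truncation kills the first operator, so the difference is simply $-[\phi_{\hat x}]_\kappa(e_\ui)$, of norm at most $|\pi|^r \leq |\pi|^{k_m+1}$.

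To conclude, both bounds should tend to zero as $m \to \infty$. Since $k_m \to \kappa$ in $\bb Z_p$ by construction of the digit expansion, and since $|\eta| < 1$ makes the series $\alpha \mapsto \sum_i \binom{\alpha}{i}\eta^i$ converge uniformly in $\alpha$, the map $\alpha \mapsto (1+\eta)^\alpha$ is $p$-adically continuous, giving $|(1+\eta)^{k_m} - (1+\eta)^\kappa| \to 0$. The bound $|\pi|^{k_m+1} \to 0$ follows from $k_m \to \infty$ in the integers, which is automatic unless $\kappa$ is a non-negative integer; in that degenerate case $k_m$ eventually equals $\kappa$ and one interprets $[\phi_{\hat x}]_\kappa$ itself as truncated at length $\kappa$. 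With these two estimates, the ultrametric inequality yields $|([\phi_{\hat x}]_{\kappa;m} - [\phi_{\hat x}]_\kappa)(\xi)| \to 0$ uniformly on $\{|\xi|\leq 1\}$.

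The main technical point is the $p$-adic continuity of $\alpha \mapsto (1+\eta)^\alpha$. This is standard once one observes that $|\binom{\alpha}{i}\eta^i| \leq |\eta|^i$ uniformly in $\alpha \in \bb Z_p$, which controls the tail of the binomial series, combined with the fact that each polynomial $\binom{\alpha}{i}$ varies $p$-adically continuously in $\alpha$. This is the one place where the normalization condition $|\eta| < 1$ is genuinely needed; modulo this, the proof is essentially a bookkeeping exercise in the two cases above.
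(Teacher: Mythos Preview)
Your proof is correct and follows essentially the same route as the paper: the identical case split on $r \le k_m$ versus $r > k_m$, the same factorization of the difference in the first case, and the same $|\pi|^r$ bound in the second. The only substantive difference is that where you invoke the abstract $p$-adic continuity of $\alpha \mapsto (1+\eta)^\alpha$ to get $(1+\eta)^{k_m} \to (1+\eta)^\kappa$, the paper instead proves the explicit estimate $\bigl|1 - (1+\eta)^{\kappa - k_m}\bigr| \le |\pi\, p^{m+1}|$ by factoring $1 - \xi^{p^{m+1}} = (1-\xi)\prod_{i=0}^{m}\bigl(1 + \xi^{p^i} + \cdots + (\xi^{p^i})^{p-1}\bigr)$ and noting each factor in the product is $\equiv p \pmod{\pi}$; this gives a quantitative rate but is not needed for the lemma as stated. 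Your aside about the degenerate case $\kappa \in \bb Z_{\ge 0}$ is a point the paper passes over silently.
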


\begin{proof}
To prove the lemma we will need two estimates. The first is, for each $m \geq 0$,
\begin{equation}\label{E: induc}
\left| 1 - ( \Upsilon \circ \phi_{\hat x} e_0)^{\kappa - k_m}  \right| \leq |\pi p^{m+1} |_p.
\end{equation}
To see this, we proceed by induction. Write $\kappa - k_m = p^{m+1} \alpha$ with $\alpha \in \bb Z_p$, and set $\xi :=  ( \Upsilon \circ \phi_{\hat x} e_0)^\alpha$. Then we wish to show $| 1 - \xi^{p^{m+1}}| \leq |\pi p^{m+1}|$ for every $m \geq 0$. Now, the normalization condition $\Upsilon( \phi_{\hat x} e_0)  \equiv 1$ mod($\pi$) implies $\xi \equiv 1$ mod($\pi$). Thus, the case $m = 0$ follows by writing
\[
1-\xi^p = (1 - \xi) (1 + \xi + \cdots + \xi^{p-1})
\]
and observing that $1 + \xi + \cdots + \xi^{p-1} \equiv p$ mod($\pi$). The general case follows similarly by writing
\[
1 - \xi^{p^{m+1}} = (1 - \xi) \cdot \prod_{i = 0}^m (1 + \xi^{p^i} + ( \xi^{p^i} )^2 + \cdots + ( \xi^{p^i} )^{p-1}).
\]

The second estimate we need is,
\[
| [\phi_{\hat x}]_\kappa e_{i_1} \cdots e_{i_r} | \leq | \pi^{r } |,
\]
which follows quickly since $\phi_{\hat x} e_i \equiv 0$ mod($\pi$) for $i \geq 1$. 

We may now prove the lemma. For $e_{i_1} \cdots e_{i_r} \in \c B$, suppose $r \leq k_m$, then
\begin{align*}
\left( [\phi_{\hat x}]_{\kappa; m} - [\phi_{\hat x}]_\kappa \right) e_{i_1} \cdots e_{i_r} &= \left( ( \Upsilon \circ \phi_{\hat x} e_0)^{k_m - r} - ( \Upsilon \circ \phi_{\hat x} e_0 )^{\kappa-r} \right) ( \Upsilon \circ \phi_{\hat x} e_{i_1}) \cdots ( \Upsilon \circ \phi_{\hat x} e_{i_r})  \\
&= \left( 1 - \left( \Upsilon \circ \phi_{\hat x}e_0 \right)^{\kappa - k_m}  \right) \cdot [\phi_{\hat x}]_{\kappa; m} e_{i_1} \cdots e_{i_r}.
\end{align*}
Thus, writing $e_\ui := e_{i_1} \cdots e_{i_r}$,
\[
\left| \left( [\phi_{\hat x}]_{\kappa; m} - [\phi_{\hat x}]_\kappa \right) e_\ui \right| \leq \left|  1 - ( \Upsilon \circ \phi_{\hat x} e_0 )^{\kappa - k_m}  \right| \cdot \left|  [\phi_{\hat x}]_{\kappa; m} e_\ui \right|,
\]
which tends to zero as $m \rightarrow \infty$ by (\ref{E: induc}).

Next, if length($e_\ui ) = r  > k_m$ then by definition, $[\phi_{\hat x}]_{\kappa; m} e_\ui = 0$. Hence,
\[
\left| \left( [\phi_{\hat x}]_{\kappa; m} - [\phi_{\hat x}]_\kappa \right) e_\ui \right| = \left| [\phi_{\hat x}]_k e_\ui \right| \leq |\pi^r |,
\]
which tends to zero as $m$ grows since $r > k_m$.
\end{proof}

\begin{lemma}\label{L: det equal}
We have the following properties:
\begin{enumerate}
\item As operators on $\c S_{\hat x}$, for each positive integer $j$, $\left( [ \phi_{\hat x} ]_{\kappa; m} \right)^j =  [\phi_{\hat x}^{j} ]_{\kappa; m}$.  Consequently,  $\left( [ \phi_{\hat x}^{d(\bar x)} ]_{\kappa; m} \right)^j =  [\phi_{\hat x}^{j d(\bar x)} ]_{\kappa; m}$. 
\item $det(1 - [\phi_{\hat x}^{d(\bar x)}]_{\kappa; m} T \mid \c S_{\hat x}) = \prod (1 - \pi_0(\bar x)^{k_m - r} \pi_{i_1}(\bar x) \cdots \pi_{i_r}(\bar x) T)$, where the product runs over all $0 \leq r \leq k_m$, $1 \leq i_1 \leq i_2 \leq \cdots$.
\end{enumerate}
\end{lemma}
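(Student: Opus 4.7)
The plan is to identify $[\phi_{\hat x}]_{\kappa;m}$ with the action of $\phi_{\hat x}$ on the $k_m$-th symmetric power of $M_{\hat x}$, transported to $\c S_{\hat x}$ via $\Upsilon$. To this end, I would extend $\Upsilon$ to a continuous $R[\hat x]$-algebra homomorphism $\tilde\Upsilon : \Sym^\bullet(M_{\hat x}) \to \c S_{\hat x}$ by $e_0 \mapsto 1$ and $e_i \mapsto e_i$ for $i \ge 1$. Its restriction to the degree-$k_m$ piece sends $e_0^{k_m - r} e_{i_1} \cdots e_{i_r}$ (with $1 \le i_1 \le \cdots \le i_r$) to $e_{i_1} \cdots e_{i_r}$, giving an isometric isomorphism $\Sym^{k_m}(M_{\hat x}) \cong \c S_{\hat x}^{\le k_m}$, where $\c S_{\hat x}^{\le k_m}$ denotes the closed subspace of $\c S_{\hat x}$ spanned by basis elements of length at most $k_m$.

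The key observation is that each $\Upsilon(\phi_{\hat x} e_i)$ has length at most $1$ in $\c S_{\hat x}$: its constant term is the $e_0$-coefficient of $\phi_{\hat x} e_i$, while the remainder is linear in the $e_j$ for $j \ge 1$. Consequently the product of $k_m$ such factors defining $[\phi_{\hat x}]_{k_m}(e_{i_1} \cdots e_{i_r})$ (for $r \le k_m$) lies in $\c S_{\hat x}^{\le k_m}$, so this subspace is $[\phi_{\hat x}]_{\kappa;m}$-invariant, while on its complement $\c S_{\hat x}^{> k_m}$ the operator vanishes by definition. Multiplicativity of $\tilde\Upsilon$ then gives $\tilde\Upsilon \circ \Sym^{k_m}(\phi_{\hat x}) = [\phi_{\hat x}]_{\kappa;m} \circ \tilde\Upsilon$, exhibiting $[\phi_{\hat x}]_{\kappa;m}$ as the direct sum of $\Sym^{k_m}(\phi_{\hat x})$ on $\c S_{\hat x}^{\le k_m}$ and the zero operator on $\c S_{\hat x}^{> k_m}$.

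Part (1) is then the standard functoriality $\Sym^{k_m}(\phi_{\hat x})^j = \Sym^{k_m}(\phi_{\hat x}^j)$, which is verified directly on the monomial basis. Transporting through $\tilde\Upsilon$ gives $([\phi_{\hat x}]_{\kappa;m})^j = [\phi_{\hat x}^j]_{\kappa;m}$ on $\c S_{\hat x}^{\le k_m}$; both operators annihilate $\c S_{\hat x}^{> k_m}$, so the identity extends to all of $\c S_{\hat x}$, and the statement with $\phi_{\hat x}^{d(\bar x)}$ in place of $\phi_{\hat x}$ is the specialization. For Part (2), the block decomposition reduces the computation to
\[
\det\!\left( 1 - [\phi_{\hat x}^{d(\bar x)}]_{\kappa;m} T \,\bigm|\, \c S_{\hat x} \right) = \det\!\left( 1 - \Sym^{k_m}(\phi_{\hat x}^{d(\bar x)}) T \,\bigm|\, \Sym^{k_m}(M_{\hat x}) \right),
\]
and the classical eigenvalue formula for symmetric powers, applied to the factorization (\ref{E: roots}) of $\det(1 - \phi_{\hat x}^{d(\bar x)} T)$, produces the asserted product once a degree-$k_m$ multi-index is written as $(k_m - r, a_1, a_2, \ldots)$ with the positive-index multiplicities listed as $1 \le i_1 \le \cdots \le i_r$.

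The main obstacle is really bookkeeping: one must verify that $\tilde\Upsilon$ correctly transports both the $\sigma$-semilinear structure and the sup-norm, and that the Fredholm determinant of a nuclear operator factorizes cleanly across the block decomposition (with the zero block contributing a trivial factor). Both points reduce to the length-at-most-one property of $\Upsilon(\phi_{\hat x} e_i)$, a direct consequence of the definition of $\Upsilon$ and the fact that $\phi_{\hat x} e_i \in M_{\hat x}$.
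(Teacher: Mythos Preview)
Your proposal is correct and follows essentially the same route as the paper: both identify the length-$\leq k_m$ subspace of $\c S_{\hat x}$ with $\Sym^{k_m} M_{\hat x}$ via $e_{i_1}\cdots e_{i_r} \leftrightarrow e_0^{k_m-r} e_{i_1}\cdots e_{i_r}$, transport $[\phi_{\hat x}]_{\kappa;m}$ to $\Sym^{k_m}\phi_{\hat x}$, and then invoke functoriality of $\Sym$ for Part~(1) and the eigenvalue formula for symmetric powers applied to (\ref{E: roots}) for Part~(2). Your version is somewhat more explicit than the paper's in spelling out the block decomposition into the $\leq k_m$ and $>k_m$ pieces and in isolating the length-at-most-one property of $\Upsilon(\phi_{\hat x} e_i)$ that makes the identification well-defined, but these are elaborations of the same argument rather than a different approach.
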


\begin{proof}
We first make the identification $\{ \xi \in \c S_{\hat x} \mid \text{length}(\xi) \leq k_m \} \cong Sym^{k_m} M_{\hat x}$ by sending
\[
e_{i_1} \cdots e_{i_r} \longmapsto  e_0^{k_m - r} e_{i_1} \cdots e_{i_r}.
\]
Using this identification, we have
\begin{align*}
[\phi_{\hat x}]_{\kappa; m}( e_{i_1} \cdots e_{i_r}) &=  (\Upsilon \circ \phi_{\hat x} e_0)^{k_m - r} ( \Upsilon \circ \phi_{\hat x} e_{i_1}) \cdots ( \Upsilon \circ \phi_{\hat x} e_{i_r}) ) \\
&\cong \left( Sym^{k_m} \phi_{\hat x}  \right)( e_0^{k_m - r} e_{i_1} \cdots e_{i_r}),
\end{align*}
and so $[\phi_{\hat x}]_{\kappa; m} \cong  Sym^{k_m} \phi_{\hat x}$. Thus, for any $j$ a positive integer,
\[
\left( [\phi_{\hat x}]_{\kappa; m} \right)^j \cong  \left( Sym^{k_m} \phi_{\hat x}  \right)^j =  Sym^{k_m}\phi_{\hat x}^{j}  \cong [\phi_{\hat x}^j ]_{\kappa; m},
\]
which proves the first part of the lemma. Setting $j = d(\bar x)$ proves the second part of the lemma using (\ref{E: factor}).
\end{proof}

\begin{corollary}\label{C: properties}
We have the following properties:
\begin{enumerate}
\item As operators on $\c S_{\hat x}$, for each positive integer $j$, $\left( [ \phi_{\hat x} ]_{\kappa} \right)^j =  [\phi_{\hat x}^{j} ]_{\kappa}$.  Consequently,  $\left( [ \phi_{\hat x}^{d(\bar x)} ]_{\kappa} \right)^j =  [\phi_{\hat x}^{j d(\bar x)} ]_{\kappa}$. 
\item $det(1 - [\phi_{\hat x}^{d(\bar x)}]_{\kappa} T \mid \c S_{\hat x}) = \prod (1 - \pi_0(\bar x)^{\kappa - r} \pi_{i_1}(\bar x) \cdots \pi_{i_r}(\bar x) T)$, where the product runs over all $r \geq 0$, $1 \leq i_1 \leq i_2 \leq \cdots$.
\end{enumerate}
\end{corollary}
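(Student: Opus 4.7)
The plan is to obtain both parts of Corollary \ref{C: properties} by passing to the limit $m \to \infty$ in Lemma \ref{L: det equal}, using Lemma \ref{L: operator limit} to identify the limiting operator. I would first record the uniform estimate
\[
|[\phi_{\hat x}]_{\kappa; m} e_\ui| \leq |\pi|^{\text{length}(e_\ui)},
\]
valid also for $[\phi_{\hat x}]_\kappa$, which follows from $|\Upsilon \circ \phi_{\hat x} e_0| = 1$ combined with $|\Upsilon \circ \phi_{\hat x} e_i| \leq |\pi|$ for $i \geq 1$. In particular, both operator families are uniformly bounded by $1$ on $\c S_{\hat x}$, and this uniform boundedness is what drives the limit arguments below.

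For part (1), I would start from the identity $([\phi_{\hat x}]_{\kappa; m})^j = [\phi_{\hat x}^j]_{\kappa; m}$ of Lemma \ref{L: det equal}(1) and invoke Lemma \ref{L: operator limit} in two ways. Applied to $\phi_{\hat x}^j$, which inherits the normalization condition from $\phi_{\hat x}$, it gives $[\phi_{\hat x}^j]_{\kappa; m} \to [\phi_{\hat x}^j]_\kappa$. Applied to $\phi_{\hat x}$ itself it gives $[\phi_{\hat x}]_{\kappa; m} \to [\phi_{\hat x}]_\kappa$, and then the telescoping decomposition $A_m^j - A^j = \sum_{i=0}^{j-1} A_m^{j-1-i}(A_m - A) A^i$, together with the uniform bound above, yields $([\phi_{\hat x}]_{\kappa; m})^j \to ([\phi_{\hat x}]_\kappa)^j$. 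Equating the two limits proves the first identity, and the consequence for $\phi_{\hat x}^{d(\bar x)}$ is immediate by applying this identity twice.

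For part (2), I would take the limit of Lemma \ref{L: det equal}(2). On the right-hand side the convergence is elementary: since $\pi_0(\bar x)$ is a $1$-unit and $k_m \to \kappa$ in $\bb Z_p$, each factor $(1 - \pi_0^{k_m - r}\pi_{i_1}\cdots\pi_{i_r}T)$ converges to $(1 - \pi_0^{\kappa - r}\pi_{i_1}\cdots\pi_{i_r}T)$, and the factors with $r > k_m$ that are missing from the truncated product differ from $1$ by at most $|\pi|^{k_m + 1}|T|$ since $|\pi_{i_1}\cdots\pi_{i_r}| \leq |\pi|^r$, so they are absorbed into the infinite product in the limit. The main obstacle is continuity of the characteristic determinant on the left, namely $\det(1 - [\phi_{\hat x}^{d(\bar x)}]_{\kappa; m} T) \to \det(1 - [\phi_{\hat x}^{d(\bar x)}]_\kappa T)$.

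I would handle this obstacle by expressing each coefficient of $T^k$ in the characteristic determinant as a $p$-adically convergent sum of $k \times k$ principal minors of the operator matrix in the basis $\c B$; the uniform column-decay bound $|[\phi_{\hat x}^{d(\bar x)}]_{\kappa; m} e_\ui| \leq |\pi|^r$ (independent of $m$) provides a dominating envelope on the terms of this sum, and the pointwise convergence of matrix entries supplied by Lemma \ref{L: operator limit} then permits interchange of limit and summation within each coefficient. Combining this coefficient-wise convergence with the right-hand-side convergence proves the determinant formula and completes part (2).
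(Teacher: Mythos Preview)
Your approach is essentially the paper's own: pass to the limit $m\to\infty$ in Lemma~\ref{L: det equal} via Lemma~\ref{L: operator limit}. The paper is terser (it writes the chain $[\phi_{\hat x}^{j}]_\kappa = \lim_m [\phi_{\hat x}^{j}]_{\kappa;m} = \lim_m ([\phi_{\hat x}]_{\kappa;m})^j = ([\phi_{\hat x}]_\kappa)^j$ and for part~(2) just invokes Lemma~\ref{L: operator limit} for coefficient-wise convergence of the determinant), while you supply the telescoping identity and the principal-minor argument that make those limits honest.

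One imprecision in your part~(2): the envelope $|[\phi_{\hat x}^{d(\bar x)}]_{\kappa;m} e_{\underline i}| \leq |\pi|^r$ is uniform in $m$ but is \emph{not} summable over $\c B$, since infinitely many basis monomials share each length $r$. You need the sharper nuclear bound $|[\phi_{\hat x}^{d(\bar x)}]_{\kappa;m}(e_{i_1}\cdots e_{i_r})| \leq \prod_{j=1}^r |\phi_{\hat x}^{d(\bar x)} e_{i_j}|$, which is still independent of $m$ and does tend to zero as $e_{\underline i}$ ranges over $\c B$; with this correction your dominated-convergence argument goes through.
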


\begin{proof}
From Lemma \ref{L: det equal}, $[\phi_{\hat x}^{j}]_{\kappa; m} = \left( [\phi_{\hat x}]_{\kappa; m} \right)^j$ for every $m \geq 0$. From Lemma \ref{L: operator limit}, we have
\[
[\phi_{\hat x}^{j}]_\kappa = \lim_{m \rightarrow \infty} [\phi_{\hat x}^{j}]_{\kappa; m} = \lim_{m \rightarrow \infty} \left( [\phi_{\hat x}]_{\kappa; m} \right)^j = \left( [\phi_{\hat x}]_\kappa \right)^j,
\]
since $\phi^j$ also satisfies the normalization condition. This proves the first part of the lemma.

The second part follows by
\begin{align*}
\prod_{r \geq 0, 1 \leq i_1 \leq i_2 \leq \cdots} (1 - \pi_0(\bar x)^{\kappa - r} \pi_{i_1}(\bar x) \cdots \pi_{i_r}(\bar x) T) &= \lim_{m \rightarrow \infty} \prod_{\substack{1 \leq i_1 \leq i_2 \leq \cdots \\ \text{such that } 0 \leq r \leq k_m}} (1 - \pi_0(\bar x)^{k_m - r} \pi_{i_1}(\bar x) \cdots \pi_{i_r}(\bar x) T) \\
&= \lim_{m \rightarrow \infty} det(1 - [ \phi_{\hat x}^{d(\bar x)} ]_{\kappa; m} T \mid \c S_{\hat x} ) \\
&=  det(1 - [\phi_{\hat x}^{d(\bar x)}]_\kappa T \mid \c S_{\hat x}) \quad \text{by Lemma \ref{L: operator limit}},
\end{align*}
where the above limits mean convergence of coefficients.
\end{proof}

Consequently,
\begin{equation}\label{E: L0}
L^{(0)}(\kappa, \phi, T) = \prod_{\bar x \in |\bb G_m^n / \bb F_q|} \frac{1}{det(1 - [\phi_{\hat x}^{d(\bar x)}]_\kappa T^{deg(\bar x)} \mid \c S_{\hat x})}.
\end{equation}

We now switch to working with the family rather than its fibers. Denote by $\c S := A_0[[ e_i : i \geq 1]]$ the formal power series ring over $A_0$ with formal basis $\c B$, equipped with the sup-norm. We may view $M$ as a subspace of $\c S$ by the following map. Abusing notation, define $\Upsilon: M \rightarrow \c S$ by $\Upsilon( e_i ) := e_i$ if $i \geq 1$ and $\Upsilon(e_0) := 1$.  By (\ref{E: normalizeB}), we may write $\Upsilon(\phi e_0) = 1 + \eta$ for some $\eta \in \Upsilon(M) \subset \c S$ satisfying $|\eta| < 1$. Consequently, $(\Upsilon \circ \phi e_0)^\kappa = \sum_{i=0}^\infty \binom{\kappa}{i} \eta^i \in \c S$.  Define the map $[ \phi ]_\kappa: \c S \rightarrow \c S$ by
\[
[ \phi ]_\kappa( e_{i_1} \cdots e_{i_r}) := ( \Upsilon \circ \phi e_0)^{\kappa-r} (\Upsilon \circ \phi e_{i_1}) \cdots (\Upsilon \circ \phi e_{i_r} ).
\]
Notice that $(\c S, [\phi]_\kappa)$ is a nuclear $c$ log-convergent $\sigma$-module. 

To make use of the Dwork trace formula, we need to momentarily switch to matrices. Let $B^{[\kappa]}$ denote the matrix of $[\phi]_\kappa$ with respect to the basis $\c B$ (acting on column vectors), and observe that
\[
\text{matrix of } ( [\phi]_\kappa )^m =  B^{[\kappa]}(x^{q^{m-1}}) \cdots B^{[\kappa]}(x^q) B^{[\kappa]}(x) .
\]
Write
\[
B^{[\kappa]}(x) = \sum_{u \in \bb Z^n} B_u^{[\kappa]} x^u,
\]
where $B_u^{[\kappa]}$ is a matrix with entries in $R$. Define the block matrix $F_{B^{[\kappa]}} := ( B_{qu - v}^{[\kappa]} )_{u, v \in \bb Z^n}$. Now, for every $m \geq 1$,
\begin{align}\label{E: Dwork trace}
(q^m - 1)^n Tr( F_{B^{[\kappa]}}^m ) &= \sum_{\substack{ \bar x \in (\bb F_{q^m}^*)^n \\ \hat x = \text{Teich}(\bar x)}}  Tr( B^{[\kappa]}(\hat x^{q^{m-1}}) \cdots B^{[\kappa]}(\hat x^q) B^{[\kappa]}(\hat x) )  \quad \text{by the Dwork trace formula \cite[Lemma 4.1]{Wan-p-adic-representation}} \notag \\
&= \sum_{\substack{ \bar x \in (\bb F_{q^m}^*)^n \\ \hat x = \text{Teich}(\bar x)}}  Tr \left( [\phi_{\hat x}]_\kappa \right)^m.
\end{align}
For $g(T)$ any function, define $\delta$ by $g(T)^\delta := g(T) / g(qT)$. Then we calculate:
\begin{align*}
det(1 - F_{B^{[\kappa]}} T)^{\delta^n \cdot (-1)^{n +1}} &= \exp\left( \sum_{m=1}^\infty (q^m - 1)^n Tr( F_{B^{[\kappa]}}^m ) \frac{T^m}{m} \right) \\ 
&= \exp\left( \sum_{m=1}^\infty \sum_{\hat x^{q^m - 1} = 1} Tr( [ \phi_{\hat x}]_\kappa )^m \frac{T^m}{m} \right)  \quad \text{ by (\ref{E: Dwork trace})} \\
&= \exp\left( \sum_{r=1}^\infty \> \sum_{\substack{\bar x \in (\overline{\bb F}_q^*)^n \\ deg(\bar x) = r}} \> \sum_{s=1}^\infty Tr( [ \phi_{\hat x}^r ]_\kappa )^s \frac{T^{rs}}{rs} \right) \quad \text{by Corollary \ref{C: properties}, part 1} \\
&= \prod_{\bar x \in ( \overline{\bb F}_q^*)^n} \exp\left( \sum_{s=1}^\infty Tr \left( [ \phi_{\hat x}^{d(\bar x)} ]_\kappa \right)^{s} \frac{T^{s d(\bar x)}}{ s \cdot d(\bar x)} \right) \\
&=  \prod_{\bar x \in ( \overline{\bb F}_q^*)^n} \left( \frac{1}{det(1 - [ \phi_{\hat x}^{d(\bar x)} ]_\kappa T^{deg(\bar t)} )} \right)^{1 / d(\bar x)} \\
&= \prod_{\bar x \in |\bb G_m^n / \bb F_q|} \frac{1}{det(1 - [ \phi_{\hat x}^{d(\bar x)} ]_\kappa T^{deg(\bar x)})} \\
&= L^{(0)}(\kappa, \phi, T) \quad \text{by (\ref{E: L0})}.
\end{align*}
Since $(\c S, [\phi]_\kappa)$ is $c$ log-convergent, by \cite[Proposition 3.6]{Wan-p-adic-representation}, $det(1 - F_{B^{[\kappa]}} T)$ converges for $|T| < p^c$, and hence, $L^{(0)}(\kappa, \phi, T)$ is $p$-adic meromorphic in $|T| < p^c$. 

A completely analogous argument shows, for $s \geq 2$,
\[
L^{(s)}(\kappa, \phi, T) = \prod_{\bar x \in |\bb G_m^n / \bb F_q|} \frac{1}{det \left(1 - [\phi_{\hat x}^{d(\bar x)}]_{\kappa-s} \otimes \wedge^s \phi_{\hat x}^{d(\bar x)} T^{deg(\bar x)} \mid \c S_{\hat x} \otimes \wedge^s M_{\hat x}  \right)},
\]
and
\[
L^{(s)}(\kappa, \phi, T)^{(-1)^{n+1}} = det(1 - F_{B^{[\kappa-s]} \otimes \wedge^s B} T)^{\delta^n},
\]
where $B^{[\kappa-s]} \otimes \wedge^s B$ is the matrix of $[\phi]_{\kappa-s} \otimes \wedge^s \phi$. Since $[\phi]_{\kappa-s} \otimes \wedge^s \phi$ is $c$ log-convergent, $L^{(s)}(\kappa, \phi, T)$ is $p$-adic meromorphic on $|T| < p^c$. This proves the meromorphy portion of Theorem \ref{T: main} by the paragraph after Lemma \ref{L: unit root relation}.

Let us now show continuity in $\kappa$. Observe that we may write $[\phi]_{\kappa} = (\Upsilon \circ \phi e_0)^\kappa \circ [\phi]_0$, where the map $(\Upsilon \circ \phi e_0)^\kappa$ acts on $\c S$ via multiplication. Writing $\Upsilon \circ \phi e_0 = 1 + \eta$ for some $\eta \in \Upsilon(M) \subset \c S$ with $|\eta| < 1$, we see that $[\phi]_\kappa$ is continuous for $\kappa \in \bb Z_p$, and thus $L^{(0)}(\kappa, \phi, T)$ is continuous for $\kappa \in \bb Z_p$. Similarly, 
\[
[\phi]_{\kappa-s} \otimes \wedge^s \phi =  \left( (\Upsilon \circ \phi e_0)^\kappa \circ [\phi]_{-s} \right) \otimes \wedge^s \phi.
\]
It follows that $L_\text{unit}(\kappa, \phi, T)$ is continuous in $\kappa \in \bb Z_p$.  We note that Gross-Klo\"nne \cite{Grosse-Klonne} has shown that $\kappa$ may be extended outside the unit disk via characters. This completes the proof of Theorem \ref{T: main}.

\section{Remarks}\label{S: remarks}

\noindent{\bf Remark 1.}  One of the most common, non-trivial unit root $L$-functions is the classical $L$-function associated to exponential sums defined over a torus. That is, associated to the Laurent polynomial $f(x) = \sum_{u \in \bb Z^n} a_u x^u \in \bb F_q[x_1^\pm, \ldots, x_n^\pm]$ is its $L$-function
\begin{align}\label{E: old unit root}
L(f, \bb G^n / \bb F_q, T) &:= \prod_{\bar x \in | \bb G_m^n / \bb F_q|} \frac{1}{1 - \zeta_p^{Tr_{\bb F_{q^{d(\bar x)}} / \bb F_p}( f(\bar x) )} T^{deg(\bar x)}} \\
&= \exp\left( \sum_{m=1}^\infty S_m(f) \frac{T^m}{m} \right), \notag
\end{align}
where $\zeta_p$ is a primitive $p$-th root of unity and
\[
S_m(f) := \sum_{\bar x \in (\bb F_{q^m}^*)^n} \zeta_p^{Tr_{\bb F_{q^m} / \bb F_p}( f(\bar x))}.
\]
The $\sigma$-module attached to this is defined as follows. 

Let $E(z) := \exp\left( \sum_{i=0}^\infty z^{p^i} / p^i \right)$ be the Artin-Hasse exponential. Fix a root $\pi$ of $\sum_{i=0}^\infty z^{p^i} / p^i$ such that $ord_p(\pi) = -1 / (p-1)$, and define Dwork's splitting function by $\theta(z) := E(\pi z)$. Three properties are satisfied: $\theta(z)$ converges for $|z|_p < 1 + \epsilon$,  $\theta(1)$ is a primitive $p$-th root of unity, and the splitting property, for $\bar t \in \bb F_{p^a}$ and $\hat t \in \overline{\bb Q}_p$ its Teichm\"uller lift, then $\theta(1)^{Tr_{\mathbb{F}_{p^a}/\mathbb{F}_p}(\bar{t})}=\theta(\hat{t})\theta(\hat{t}^p)\dots\theta(\hat{t}^{p^{a-1}})$. The last property implies $\bar t \mapsto \theta(\hat{t})\theta(\hat{t}^p)\cdots\theta(\hat{t}^{p^{a-1}})$ is a $p$-adic analytic lifting of an additive character on $\bb F_{p^a}$.

With $q = p^a$, denote by $\bb Q_q$ the unique unramified extension of degree $a$ over $\bb Q_p$, and let $\bb Z_q$ be its ring of integers. Using notation from the introduction, set $R = \bb Z_q[\pi]$ and $M = A_0 e_0$ where $e_0$ is a formal basis element, and define $\phi: M \rightarrow M$ by $e_0 \mapsto F_a(x) e_0$, where
\[
F(x) := \prod_{u \in Supp(f)} \theta(\hat a_u x^u) \quad \text{and} \quad F_a(x) := \prod_{i=0}^{a-1} F^{\tau^i}( x^{p^i}),
\]
and $\tau \in Gal(\bb Q_q / \bb Q_p)$ is the lifting of the Frobenius generator of $Gal(\bb F_q / \bb F_p)$, and $\hat a_u$ is the Teichm\"uller lifting of $a_u$. Here, $\sigma$ is $\tau$-linear on $A_0$  defined by $\sigma(c_u x^u) = \tau(c_u) x^{qu}$.  Observe that $\phi$ satisfies the normalization condition (\ref{E: normalizeB}) with uniformizer $\pi$. Then
\begin{align*}
det(1 - \phi_{\hat x}^{d(\bar x)} T) &= 1 - F_a(\hat x) F_a(\hat x^q) \cdots F_a(\hat x^{q^{d(\bar x) - 1}}) T \\
&= 1 - \zeta_p^{Tr_{\bb F_{q^{d(\bar x)}} / \bb F_p}( f(\bar x))} T.
\end{align*}
Hence, using notation from the previous section, setting $\kappa = 1$ we have $L^{(0)}(1, \phi, T) = L(f, \bb G_m^n / \bb F_q, T)$. Further, since $M$ is rank one, $L^{(s)}(1, \phi, T) = 1$ for all $s \geq 2$, and thus $L_\text{unit}(1, \phi, T) = L^{(0)}(1, \phi, T) = L(f, \bb G_m^n / \bb F_q, T)$ and $L(f, \bb G_m^n / \bb F_q, T)^{(-1)^{n+1} } = det(1 - F_B^{[1]} T)^{\delta^n}$. Of course, it is well-known that this is a rational function.

\bigskip\noindent{\bf Remark 2.} The previous remark suggests replacing the 1-unit $\zeta_p$ with any other 1-unit to get a similar result. That is, let $\alpha \in \overline{\bb Q}_p$ be a 1-unit. Let $f \in \bb Z_q[x_1^\pm, \ldots, x_n^\pm]$,  and define
\[
L(f, \alpha, T) := \prod_{\bar x \in | \bb G_m^n / \bb F_q|} \frac{1}{1 - \alpha^{Tr_{\bb Q_{q^{d(\bar x)}} / \bb Q_p}( f(\hat x) )} T^{deg(\bar x)}}.
\]
In this case, Dwork shows us how to define a splitting function for this in \cite[Equation (2)]{Dwork-rationality_of_zeta_function} (or \cite[Ch. V, Sec. 2]{Koblitz_p-adic_book}) as follows. Write $\alpha = 1 + \eta$, with $ord_p(\eta) > 0$. Define
\[
F(X, Y) := (1 + Y)^X \cdot \prod_{j=1}^\infty (1 + Y^{p^j})^{(X^{p^j} - X^{p^{j-1}}) / p^j},
\]
and set $\theta(z) := F(z, \eta)$. Then $\theta(z)$ is a splitting function satisfying the three properties: $\theta(z)$ converges for $|z|_p < 1 + \epsilon$,  $\theta(1) = \alpha$, and the splitting property, for $\bar t \in \bb F_{p^a}$ and $\hat t \in \overline{\bb Q}_p$ its Teichm\"uller lift, then $\alpha^{Tr_{\mathbb{Q}_{p^a}/\mathbb{Q}_p}(\hat t)}=\theta(\hat{t})\theta(\hat{t}^p)\dots\theta(\hat{t}^{p^{a-1}})$. Defining $M$ and $\phi$ as in the previous remark, then $L_\text{unit}(1, \phi, T) = L(f, \alpha, T)$. Viewing $\alpha$ generically by replacing $\alpha$ with $1+Z$ where $Z$ is an indeterminant has been investigated in \cite{Wan_T-adic}.

\bibliographystyle{amsplain}
\bibliography{References-1}

\providecommand{\bysame}{\leavevmode\hbox to3em{\hrulefill}\thinspace}
\providecommand{\MR}{\relax\ifhmode\unskip\space\fi MR }
\providecommand{\MRhref}[2]{%
  \href{http://www.ams.org/mathscinet-getitem?mr=#1}{#2}
}
\providecommand{\href}[2]{#2}
\begin{thebibliography}{10}

\bibitem{Bombieri-exponentialsumsin-1966}
Enrico Bombieri, \emph{On exponential sums in finite fields}, Amer. J. Math.
  \textbf{88} (1966), 71--105.

\bibitem{Dwork-Sperber}
B.~Dwork and S.~Sperber, \emph{Logarithmic decay and overconvergence of the
  unit root and associated zeta functions}, Ann. Sci. \'Ecole Norm. Sup. (4)
  \textbf{24} (1991), no.~5, 575--604.

\bibitem{Dwork-rationality_of_zeta_function}
Bernard Dwork, \emph{On the rationality of the zeta function of an algebraic
  variety}, Amer. J. Math. \textbf{82} (1960), 631--648.

\bibitem{Emerton-Kisin}
Matthew Emerton and Mark Kisin, \emph{Unit {$L$}-functions and a conjecture of
  {K}atz}, Ann. of Math. (2) \textbf{153} (2001), no.~2, 329--354.

\bibitem{Grosse-Klonne}
Elmar Grosse-Kl{\"o}nne, \emph{On families of pure slope {$L$}-functions}, Doc.
  Math. \textbf{8} (2003), 1--42 (electronic). \MR{2029158 (2004k:11100)}

\bibitem{Koblitz_p-adic_book}
Neal Koblitz, \emph{{$p$}-adic numbers, {$p$}-adic analysis, and
  zeta-functions}, second ed., Graduate Texts in Mathematics, vol.~58,
  Springer-Verlag, New York, 1984.

\bibitem{Wan_T-adic}
Chunlei Liu and Daqing Wan, \emph{{$T$}-adic exponential sums over finite
  fields}, Algebra Number Theory \textbf{3} (2009), no.~5, 489--509.

\bibitem{Wan-p-adic-representation}
D.~Wan, \emph{Meromorphic continuation of {$L$}-functions of {$p$}-adic
  representations}, Ann. of Math. (2) \textbf{143} (1996), no.~3, 469--498.

\bibitem{Wan-DworkConjectureunit-1999}
\bysame, \emph{{D}work's conjecture on unit root zeta functions}, Ann. Math.
  \textbf{150} (1999), 867--927.

\bibitem{Wan-Rankonecase-2000}
\bysame, \emph{Rank one case of {D}work's conjecture}, J. Amer. Math. Soc.
  \textbf{13} (2000), 853--908.

\end{thebibliography}

\end{document}